\documentclass[10pt]{amsart}

\usepackage{float}
\usepackage{amsmath}
\usepackage{amsfonts}
\usepackage{amssymb}
\usepackage{graphicx}
\usepackage{hyperref}
\usepackage{mathrsfs}
\usepackage{pb-diagram}
\usepackage{epstopdf}
\usepackage{amsmath,amsfonts,amsthm,enumerate,amscd,latexsym,curves}
\usepackage{bbm}
\usepackage[mathscr]{eucal}
\usepackage{epsfig,epsf,xypic,epic}
\usepackage{caption}
\usepackage{subcaption}
\usepackage{tikz}
\usepackage{geometry}
\geometry{
	a4paper,
	total={170mm,257mm},
	left=30mm, right=30mm, 
	top=40mm, bottom=40mm
}
\usetikzlibrary{matrix,arrows,decorations.pathmorphing}


\newtheorem{theorem}{Theorem}[section]

\newtheorem{definition}[theorem]{Definition}

\newtheorem{proposition}[theorem]{Proposition}

\newtheorem{remark}[theorem]{Remark}
\newtheorem{conjecture}[theorem]{Conjecture}

\newcommand{\dbar}{\bar{\partial}}

\newcommand{\pd}[2]{\frac{\partial #1}{\partial #2}}

\newcommand{\inner}[1]{\langle #1\rangle}

\newcommand{\bb}[1]{\mathbb{#1}}
\newcommand{\cu}[1]{\mathcal{#1}}

\tabcolsep=12pt

\begin{document}
	
	\title[Reconstruction of $T_{\bb{P}^2}$ via tropical Lagrangian multi-section]{Reconstruction of $T_{\bb{P}^2}$ via tropical Lagrangian multi-section}
	\author[Y.-H. Suen]{Yat-Hin Suen}
	\address{Center for Geometry and Physics\\ Institute for Basic Science (IBS)\\ Pohang 37673\\ Republic of Korea}
	\email{yhsuen@ibs.re.kr}
	\date{\today}
	
	\begin{abstract}
		In this paper, we study the reconstruction problem of the holomorphic tangent bundle $\mathbb{T}_{\mathbb{P}^2}$ of the complex projective plane $\mathbb{P}^2$. We introduce the notion of tropical Lagrangian multi-section and cook up one by tropicalizing the Chern connection associated the Fubini-Study metric. Then we perform the reconstruction of $\mathbb{T}_{\mathbb{P}^2}$ from this tropical Lagrangian multi-section. Walling-crossing phenomenon will occur in the reconstruction process.
	\end{abstract}
	
	\maketitle
	
	\tableofcontents

	\section{Introduction}\label{sec:intro}
	
	Mirror symmetry is a duality between symplectic geometry and complex geometry. The famous SYZ conjecture \cite{SYZ} allows mathematicians to construct mirror pairs and explain homological mirror symmetry \cite{HMS} geometrically via a fiberwise Fourier--Mukai-type transform, which we call the {\em SYZ transform}.
	
	The SYZ transform has been constructed and applied to understand mirror symmetry in the semi-flat case \cite{AP, LYZ, Leung05, sections_line_bundles} and the toric case \cite{Abouzaid06, Abouzaid09, Fang08, FLTZ11b, FLTZ12, Chan09, Chan-Leung10a, Chan-Leung12, Chan12, Chan-Ueda12, CPU13, CPU14, Fang16}. But in all of these works the primary focus was on Lagrangian sections and the mirror holomorphic line bundles the SYZ program produces. For higher rank sheaves, Abouzaid \cite{Abouzaid_Family_Floer} applied the idea of family Floer cohomology to construct sheaves on the rigid analytic mirror of a given compact Lagrangian torus fibration without singular fiber. Recently, applications of the SYZ transform for unbranched Lagrangian multi-sections have been study by Kwokwai Chan and the author in \cite{CS_SYZ_imm_Lag}.

	In this paper, we study the holomorphic tangent bundle $T_{\bb{P}^2}$ of $\bb{P}^2$ in terms of mirror symmetry. Via SYZ construction, the mirror of $\bb{P}^2$ is given by a Landau-Ginzburg model $(Y,W)$ (see Section \ref{sec:P2} for a brief review). It carries a natural fibration $p:Y\to N_{\bb{R}}\cong\bb{R}^2$, which is dual (up to a Legendre transform) to the moment map fibration of $\bb{P}^2$. Since $T_{\bb{P}^2}$ is naturally an object in the derived category $D^b\text{Coh}(\bb{P}^2)$, it is natural to ask what is its mirror Lagrangian in $Y$. Being a rank 2 bundle, the mirror Lagrangian of $T_{\bb{P}^2}$ is expected to be a rank 2 Lagrangian multi-section of $p:Y\to N_{\bb{R}}$ with certain asymptotic conditions. As the base $N_{\bb{R}}$ of the fibration $p$ is simply connected, every unbranched covering map is trivial. But $T_{\bb{P}^2}$ is certainly indecomposable. Hence we are leaded to consider branched Lagrangian multi-section and the SYZ transform defined in \cite{AP, CS_SYZ_imm_Lag, LYZ} cannot be applied directly. To overcome this technicality, we introduce the notion of \emph{tropical Lagrangian multi-section} and reconstruct $T_{\bb{P}^2}$ from this tropical object.
	
	\begin{definition}[=Definition \ref{def:trop_lag}]
		Let $B$ be an $n$-dimensional integral affine manifold without boundary. A \emph{rank $r$ tropical Lagrangian multi-section} is a triple $\bb{L}:=(L,\pi,\varphi)$, where
		\begin{itemize}
			\item [a)] $L$ is a topological manifold.
			\item [b)] $\pi:L\to B$ a covering map of degree $r$ with branch locus $S\subset B$ being a union of locally closed submanifolds of codimension at least 2.
			\item [c)] $\varphi=\{\varphi_U\}$ is a multi-valued function on $L$ such that on any two affine charts $U,V\subset L\backslash\pi^{-1}(S)$ (with respective to the induced affine structure on $L\backslash\pi^{-1}(S)$via $\pi$),
			$$\varphi_U-\varphi_V=\inner{m,x}+b,$$
			for some $m\in\bb{Z}^n$ and $b\in\bb{R}$.
		\end{itemize}
	\end{definition}
	
	Let $\Sigma$ be the fan corresponds to $\bb{P}^2$ and $v_0,v_1,v_2$ be its primitive generators. The tropical Lagrangian multi-section is obtained by ``tropicalizing" the Chern connection associated to the Fubini-Study metric. We will do this in Section \ref{sec:trop_lag} by considering a family of K\"ahler metrics on $\bb{P}^2$, which gives rise to a family of Chern connections, parameterized by a small real number $\hbar>0$. We compute the limit $\hbar\to 0$ of the connections to obtain the ``tropical connection", which can be regarded as a singular connection on the total space $TN_{\bb{R}}$. From the tropical connection, we can cook up six linear functions $\varphi^{\pm}_k$, $k=0,1,2$, for which $\varphi^{\pm}_k$ are defined on $\sigma_k$, the cone generated by the rays $v_i,v_j$, $i,j\neq k$. Let $\sigma_k^{\pm}$ be two copies of $\sigma_k$ and we consider $\varphi^+_k$ (resp. $\varphi_k^-$) as a function defined on $\sigma_k^+$ (resp. $\sigma_k^-$). A piecewise linear function $\varphi$ and an integral affine manifold $L$ can be obtained by gluing $\varphi^{\pm}_k:\sigma_k^{\pm}\to\bb{R}$ together in a continuous manner. By projecting $\sigma_k^{\pm}$ to $\sigma_k$, we obtain a map $\pi:L\to N_{\bb{R}}$. The triple $\bb{L}:=(L,\pi,\varphi)$ forms a rank 2 tropical Lagrangian multi-section. See Section \ref{sec:trop_lag} for the detailed construction.
	
	Section \ref{sec:reconstruction} will be devoted to reconstructing $T_{\bb{P}^2}$ from the tropical Lagrangian multi-section $\bb{L}$. Let $U_k$ be the affine chart corresponds to the cone $\sigma_k$. For each $\sigma_k^{\pm}$, we associate a trivial line bundle $\cu{L}_k^{\pm}=\cu{O}_{U_k}$ and so, we have a trivial rank $2$ bundle on $U_k$ by taking direct sum. As the summands are indexed by the maximal cones of $L$, the gluing of $\bb{L}$ tell us how to glue these trivial rank 2 bundles together on $U_{ij}:=U_i\cap U_j$. Motivated by \cite{CS_SYZ_imm_Lag}, we can write down three naive transition functions $\tau_{10}^{sf},\tau_{21}^{sf},\tau_{02}^{sf}$ by considering the slope differences of $\varphi'$ on different maximal cones. However, this naive gluing is inconsistent due to the affine. In order to obtain a consistent gluing, we modify the naive transition functions by three invertible factors $\Theta_{10},\Theta_{21},\Theta_{02}$. Put $\tau_{j}':=\tau_{ij}^{sf}\Theta_{ij}$. These new transition functions satisfy the cocycle condition and hence define a rank 2 holomorphic vector bundle defined on $\bb{P}^2$, which we call it the \emph{instanton-corrected mirror} of $\bb{L}$. Moreover, we have
	
	\begin{theorem}[=Theorem \ref{thm:correct_mirror}]
		The instanton-corrected mirror of the tropical Lagrangian multi-section $\bb{L}$ is isomorphic to the holomorphic tangent bundle $T_{\bb{P}^2}$ of $\bb{P}^2$.
	\end{theorem}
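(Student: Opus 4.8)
The plan is to exhibit an explicit isomorphism between the instanton-corrected mirror bundle $E$, presented by the cocycle $\{\tau_{10}', \tau_{21}', \tau_{02}'\}$ satisfying the twisted cocycle condition \eqref{eqn:correct_gluing_intro}, and $T_{\bb{P}^2}$, whose transition functions on $U_{ij}$ are the Jacobians of the standard coordinate changes of $\bb{P}^2$. Since both are rank $2$ holomorphic bundles on $\bb{P}^2$ glued over the same affine cover $\{U_0,U_1,U_2\}$, it suffices to produce invertible holomorphic matrix-valued functions $g_k$ on each $U_k$ intertwining the two cocycles, i.e. $g_i \,\tau_{ij}^{T_{\bb{P}^2}}\, g_j^{-1}$ equals the corrected transition function on $U_{ij}$ (after absorbing the $J$'s into the $g_k$ appropriately — see below). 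First I would write down the corrected transition functions $\tau_{ij}'=\tau_{ij}^{sf}\Theta_{ij}$ completely explicitly in coordinates, using the six linear functions $\varphi_k^\pm$ produced in Section~\ref{sec:trop_lag} and the wall-crossing factors $\Theta_{ij}$ constructed in Section~\ref{sec:reconstruction}; the semi-flat part $\tau_{ij}^{sf}$ is diagonal (it is just $\mathrm{diag}$ of monomials coming from $\varphi_i^\pm - \varphi_j^\pm$), so the only genuinely new data are the $\Theta_{ij}$.

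Second, I would recall that the Euler sequence $0\to\mathcal{O}_{\bb{P}^2}\to\mathcal{O}_{\bb{P}^2}(1)^{\oplus 3}\to T_{\bb{P}^2}\to 0$ gives an explicit \v{C}ech description of $T_{\bb{P}^2}$: on $U_k=\{z_k\neq 0\}$ with affine coordinates, the transition matrix $\tau_{ij}^{T}$ from $U_j$ to $U_i$ is the $2\times 2$ Jacobian of the change of affine coordinates, which one can write as $(z_i/z_j)$ times an integral matrix in the remaining coordinate. The key computation is then to match: the diagonal monomial factors of $\tau_{ij}^{sf}$ should account for the scalar $z_i/z_j$ (this is where the asymptotic/slope data of $\bb{L}$, encoded in $\varphi_k^\pm$, must reproduce the weights of $\mathcal{O}(1)^{\oplus 2}$-type behaviour on each chart), while the unipotent factor $\Theta_{ij}$ — together with the constant matrix $J$ coming from the order-$3$ holonomy-$(-1)$ local system $\cu{L}$ — reproduces the off-diagonal integral part of the Jacobian. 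Concretely I expect to find $g_k$ of the form $g_k=\mathrm{diag}(\text{monomial})\cdot(\text{unipotent in the }U_k\text{-coordinate})$, possibly times a fixed power of $J$, chosen so that the cyclic relation \eqref{eqn:correct_gluing_intro} is carried to the honest cocycle relation $\tau_{02}^T\tau_{21}^T\tau_{10}^T=I$ for $T_{\bb{P}^2}$ — note the three factors of $J$ multiply to $-I$ on the mirror side, which is exactly compensated by the holonomy $-1$ of $\cu{L}$, so the identification is consistent.

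An alternative, and possibly cleaner, route is to avoid writing $g_k$ by hand: compute the Chern classes of $E$ from the cocycle (rank $2$, and $c_1(E)=3H$, $c_2(E)=3$ can be read off from the determinant line bundle $\det E \cong \mathcal{O}(3)$ and a local computation of the wall-crossing contribution to $c_2$), observe that $E$ is globally generated / has the right space of sections (the six functions $\varphi_k^\pm$ with their gluing predict $h^0(E)=6$, matching $h^0(T_{\bb{P}^2})=\dim PGL_3=8$ — wait, this forces a recount, so in fact one should verify $h^0(E)=8$), and then invoke the rigidity of $T_{\bb{P}^2}$: on $\bb{P}^2$ a rank $2$ bundle with $c_1=3H$, $c_2=3$ that is globally generated and has $H^0=8$, $H^1=H^2=0$ is forced to be $T_{\bb{P}^2}$ — for instance because such a bundle is a quotient $\mathcal{O}(1)^{\oplus 3}\twoheadrightarrow E$ with kernel $\mathcal{O}$, hence fits in the Euler sequence. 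I would likely present the explicit-cocycle argument as the main proof and mention the Chern-class argument as a sanity check.

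The main obstacle will be the bookkeeping in the first matching step: getting the wall-crossing factors $\Theta_{ij}$, the semi-flat monomials, and the constant matrix $J$ to conspire correctly requires a careful and consistent choice of coordinates and orientations on the three charts (the construction of $\bb{L}$ via the doubled cones $\sigma_k^\pm$ introduces sign/branch ambiguities), and ensuring that the twisted cocycle identity \eqref{eqn:correct_gluing_intro} — rather than the untwisted one — is the one that transports to $T_{\bb{P}^2}$. Once the $g_k$ are correctly normalized, verifying $g_i\tau_{ij}^{T}g_j^{-1}=\tau_{ij}'$ on each of the three overlaps $U_{ij}$ is a direct matrix computation, and the cyclic compatibility is then automatic from \eqref{eqn:correct_gluing_intro}.
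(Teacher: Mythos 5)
Your main route is essentially the paper's proof: the paper exhibits explicit invertible matrices $f_0,f_1,f_2$ on the three charts satisfying $\left(\tau_{02}'J^{-1}\right)f_2=f_0\tau_{02}$, $\tau_{21}'f_1=f_2\tau_{21}$, $\left(J\tau_{10}'\right)f_0=f_1\tau_{10}$ (using $\prod_i a_ib_i=-1$), which is exactly your intertwining strategy — the only simplification being that the $f_k$ turn out to be \emph{constant} matrices rather than monomial-times-unipotent. The Chern-class/Euler-sequence alternative you sketch is not used in the paper and, as your own $h^0$ recount suggests, would need more care; the direct cocycle computation is the cleaner path.
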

	
	In the last section, Section \ref{sec:wall_crossing}, we will discuss the relationship between the factors $\{\Theta_{ij}\}$ and the family Floer theory of the (conjecturally exists) mirror Lagrangian of $T_{\bb{P}^2}$. The Fourier modes $m_{ij}\in M$ of $\Theta_{ij}$, determine three cotangent directions of $N_{\bb{R}}$ the origin. As the Lagrangian $\bb{L}$ is tropical, we cannot expect one can determine which fibers of $p$ will bound holomorphic disk with the honest Lagrangian. Nevertheless, $m_{ij}$ should be regarded as the normal directions of \emph{walls}\footnote{A wall is a codimension 1 submanifold $W\subset N_{\bb{R}}$ for which there is a non-trivial holomorphic disk bounded by the Lagrangian and those fibers over $W$.} emitting from the branched point $0\in N_{\bb{R}}$.
	
	In Appendix \ref{sec:caustic}, we will give detailed review of Fukaya's local model on caustic points and his construction of mirror bundle via deformation theory \cite{Fukaya_asymptotic_analysis}, Section 6.4. This gives a symplecto-geometric explanation for the walling-crossing phenomenon in our reconstruction process.

	\subsection*{Acknowledgment}
	The author is grateful to 	
	Byung-Hee An, Kwokwai Chan, Ziming Nikolas Ma and Yong-Geun Oh for useful discussions. A special thanks goes to Katherine Lo for her encouragement during this work. 
	
	\section{SYZ mirror symmetry of $\bb{P}^2$}\label{sec:P2}
	
	We begin with reviewing some elementary facts about the complex projective plane $\bb{P}^2$.
	
	Let $N\cong\bb{Z}^2$ be a lattice of rank $2$ and set
	$$N_{\bb{R}}:=N\otimes_{\bb{Z}}\bb{R},\quad M:=Hom_{\bb{Z}}(N,\bb{Z}),\quad M_{\bb{R}}:=M\otimes_{\bb{Z}}\bb{R}.$$
	Let $\Sigma$ be the fan with primitive generators
	$$
	v_0:=(1,1)
	\quad v_1:=(-1,0),
	\quad v_2:=(0,-1),
	$$
	The associate toric variety $X_{\Sigma}$ is the complex projective plane $\bb{P}^2$. The dense torus of $\bb{P}^2$ can be identified with $TN_{\bb{R}}/N$. Let $\check{p}:TN_{\bb{R}}/N\to N_{\bb{R}}$ be the natural projection. Denote the coordinates on $N_{\bb{R}}$ by $\xi^i$ and the fiber coordinates of $\check{p}$ by $\check{y}^i$. Complex coordinates on $TN_{\bb{R}}/N$ are given by
	$$w^i:=e^{z^i}:=e^{\xi^i+\sqrt{-1}\check{y}^i}.$$
	
	There is an 1-1 correspondence\footnote{We use the convention that if a piecewise linear function $f$ is given by $f(v_i)=a_i$, then the corresponding line bundle is given by $\cu{O}\left(\sum_{i=0}^2a_iD_i\right)$.} between supporting functions on $|\Sigma|=N_{\bb{R}}$ and $(\bb{C}^{\times})^2$-equivariant line bundles on $\bb{P}^2$. Explicitly, the equivariant line bundle $\cu{O}(a_0D_0+a_1D_1+a_2D_2)$ corresponds to the supporting function $\varphi:N_{\bb{R}}\to\bb{R}$, defined by setting $\varphi(v_i):=a_i$.
	
	Let
	$$
	\sigma_0:=\bb{R}_{\geq 0}\inner{v_1,v_2}
	\quad \sigma_1:=\bb{R}_{\geq 0}\inner{v_0,v_2},
	\quad \sigma_2:=\bb{R}_{\geq 0}\inner{v_0,v_1}.
	$$
	and $U_k\cong\bb{C}^2$ be the affine chart corresponds to the cone $\sigma_k$, for $k=0,1,2$. We can trivialize $T_{\bb{P}^2}$ on $U_k$ by
	$$\tau_k:T_{\bb{P}^2}|_{U_k}\ni ([\zeta_0:\zeta_1:\zeta_2],v)\mapsto (w^i_k,w^j_k,v^i_k,v^j_k)\in\bb{C}^2\times\bb{C}^2,$$
	for $i,j,k=0,1,2$ distinct and $i<j$. Here,
	$$w^i_k:=\frac{\zeta_i}{\zeta_k} \text{ and }v=:v^i_k\pd{}{w^i_k}+v^j_k\pd{}{w^j_k}.$$
	The transition functions $\tau_{ij}:=\tau_i\circ\tau_j^{-1}$ are given by
	\begin{align*}
		\tau_{10}=\begin{pmatrix}
			-\frac{1}{(w_0^1)^2} & 0
			\\-\frac{w^2_0}{(w_0^1)^2} & \frac{1}{w_0^1}
		\end{pmatrix},
		\tau_{21}=\begin{pmatrix}
			\frac{1}{w_1^2} & -\frac{w^0_1}{(w^2_1)^2}
			\\0 & -\frac{1}{(w^2_1)^2}
		\end{pmatrix},
		\tau_{02}=\begin{pmatrix}
			-\frac{w^1_2}{(w_2^0)^2} & \frac{1}{w_2^0}
			\\-\frac{1}{(w_2^0)^2} & 0
		\end{pmatrix}.
	\end{align*}
	
	\begin{remark}
		We can relate the coordinates $w^i$ on $TN_{\bb{R}}/N$ and the inhomogeneous coordinates $w_0^i$ by setting $w^i=w_0^i|_{(\bb{C}^{\times})^2}$.
	\end{remark}
	
	It is well-known that $\bb{P}^2$ carries a K\"ahler-Einstein metric, called Fubini-Study metric. It is the Hermitian metric associated to the $(1,1)$-form
	$$\omega_{FS}:=2\sqrt{-1}\partial\dbar\phi(\xi),$$
	where
	$$\phi(\xi):=\frac{1}{2}\log(1+e^{2\xi^1}+e^{2\xi^2}).$$
	Let $\nabla_{FS}$ be the Chern connection associated to the the Fubini-Study metric. With respective to the holomorphic frame $\{\pd{}{w^1},\pd{}{w^2}\}$, it can be written as
	\begin{align*}
		\nabla_{FS}|_{U_0}=d-\frac{1}{1+|w|^2}\left[
		\begin{pmatrix}
			2|w^1|^2 & w^1\bar{w}^2
			\\0 & |w^1|^2
		\end{pmatrix}dz^1+\begin{pmatrix}
			|w^2|^2 & 0
			\\\bar{w}^1w^2 & 2|w^2|^2
		\end{pmatrix}dz^2\right],
	\end{align*}
	where $|w|^2:=|w^1|^2+|w^2|^2$. For the purpose of this paper, we need the formula of $\nabla_{FS}|_{U_0}$ in terms of the frame $\{\pd{}{z^1},\pd{}{z^2}\}$. We have
	$$\nabla_{FS}\left(\pd{}{z^i}\right)=\nabla_{FS}\left(w^i\pd{}{w^i}\right)=dz^i\otimes\pd{}{z^i}+w^i\nabla_{FS}\left(\pd{}{w^i}\right).$$
	Hence
	\begin{align*}
		\nabla_{FS}|_{U_0}=d-\frac{1}{1+|w|^2}\left[
		\begin{pmatrix}
			|w^1|^2-|w^2|^2-1 & |w^2|^2
			\\0 & |w^1|^2
		\end{pmatrix}dz^1+\begin{pmatrix}
			|w^2|^2 & 0
			\\|w^1|^2 & |w^2|^2-|w^1|^2-1
		\end{pmatrix}dz^2\right].
	\end{align*}
	
	\subsection{SYZ mirror symmetry of $\bb{P}^2$}
	
	Now, we jump to SYZ mirror symmetry of $\bb{P}^2$.  The mirror of $\bb{P}^2$ is given by the Landau-Ginzburg model $(Y,W)$, where
	\begin{align*}
		Y:=&T^*N_{\bb{R}}/M
		\\W(z_1,z_2):=&z_1+z_2+\frac{q}{z_1z_2},
	\end{align*}
	and $q>0$ is a positive constant. The complex coordinates $z_j$ are given by
	$$z_i:=e^{x_i+\sqrt{-1}y_i},$$
	where $x_i$ is the affine coordinates on $\mathring{P}$ and $y_i$ are the fiber coordinates. One can equip $Y$ with the standard symplectic structure
	$$\omega_Y:=d\xi^1\wedge dy_1+d\xi^2\wedge dy_2,$$
	and the holomorphic volume form
	$$\Omega_Y:=\frac{dz_1}{z_1}\wedge\frac{dz_2}{z_2}.$$
	
	Let $p:Y\to N_{\bb{R}}$ be the natural projection, which is clearly dual to $\check{p}:TN_{\bb{R}}/N\to N_{\bb{R}}$. The homological mirror symmetry conjecture predicts that Lagrangian branes in $Y$ should be mirror to coherent shaves in $\bb{P}^2$. In \cite{Chan09}, Chan consider Lagrangian sections of $p:Y\to N_{\bb{R}}$ with certain decay conditions at infinity and define its SYZ mirror line bundle on $\bb{P}^2$. Roughly speaking, given such a Lagrangian section $L$, one can associate a line bundle $\check{L}\to\bb{P}^2$ together with a connection $\nabla_{\check{L}}$, so that with respective to a local unitary frame $\check{1}$, one has
	$$\nabla_{\check{L}}:=d-\sqrt{-1}\left(f_1(\xi)d\check{y}^1+f_2(\xi)d\check{y}^2\right).$$
	where $(f_1,f_2)$ are local defining equations of $L$. In fact, every such connection is compatible with the metric $e^{-2F}$, where $F$ is a local potential function of the Lagrangian section $L$. In terms of the local holomorphic frame $e^{-F}\check{1}$, the connection becomes
	$$d-\left(f_1(\xi)dz^1+f_2(\xi)dz^2\right).$$
	As an example, by using the potential function $\phi$ of $\omega_{FS}$, for each $k\in\bb{Z}$, we can define the Lagrangian section
	$$L_k:=\{(\xi,k\cdot d\phi(\xi))\in Y:\xi\in N_{\bb{R}}\}.$$
	The mirror bundle of $L_k$ is the line bundle $\cu{O}(k)$ together with the connection
	$$\nabla_k=d-\left(\frac{k|w^1|^2}{1+|w|^2}dz^1+\frac{k|w^2|^2}{1+|w|^2}dz^2\right).$$
	Note that $k\cdot\phi$ is a smoothing of the supporting function corresponds to $\cu{O}(kD_0)\cong\cu{O}_{\bb{P}^2}(k)$ as
	$$\lim_{t\to\infty}\frac{k}{2}\log_t(1+t^{2\xi^1}+t^{2\xi^2})=\max\{0,k\xi^1,k\xi^2\}.$$
	Thus the differential of supporting functions should be regarded as singular Lagrangian sections of $p:Y\to N_{\bb{R}}$.

	\section{A tropical Lagrangian multi-section associated to $T_{\bb{P}^2}$ and reconstruction}\label{sec:trop_data}
	
	In this section, we introduce the notion of tropical Lagrangian multi-section and construct one by tropicalizing the Chern connection associated to the Fubini-Study metric. We then perform the reconstruction of $T_{\bb{P}^2}$ from the tropical Lagrangian multi-section. The wall-crossing phenomenon will be discussed in the last subsection.
	
	We now introduce the following
	
	\begin{definition}\label{def:trop_lag}
		Let $B$ be an $n$-dimensional integral affine manifold without boundary. A \emph{rank $r$ tropical Lagrangian multi-section} is a triple $\bb{L}:=(L,\pi,\varphi)$, where
		\begin{itemize}
			\item [a)] $L$ is a topological manifold.
			\item [b)] $\pi:L\to B$ a covering map of degree $r$ with branch locus $S\subset B$ and ramification locus $S'\subset L$ being a union of locally closed submanifolds of codimension at least 2.
			\item [c)] $\varphi=\{\varphi_U\}$ is a collection of local continuous functions on $L$ such that on any two affine charts $U,V\subset L\backslash S'$ (with respective to the induced affine structure on $L\backslash S'$ via $\pi$),
			$$\varphi_U-\varphi_V=\inner{m,x}+b,$$
			for some $m\in\bb{Z}^n$ and $b\in\bb{R}$.
		\end{itemize}
	\end{definition}
	
	Definition \ref{def:trop_lag} is a straightforward generalization of the notion of polarization in the famous Gross-Siebert program \cite{Gross_Trop_Geo_MS, GS03, GS11}. We also remark that the domain $L$ can be disconnected in general. But in this paper, $L$ is connected and the multi-valued function $\varphi$ is a single-valued continuous function.

	\subsection{Construction of the tropical Lagrangian multi-section}\label{sec:trop_lag}

	In order to obtain a tropical Lagrangian for $T_{\bb{P}^2}$, we need to ``tropicalize" the Chern connection associated to the Fubini-Study metric.
	
	To do this, we construct a family $\{(X_{\hbar},\omega_{\hbar})\}_{\hbar>0}$ of K\"ahler manifolds. Let 
	$$P:=\{(x_1,x_2)\in M_{\bb{R}}:x_1,x_2\geq 0,x_1+x_2\leq 1\}$$
	be a moment polytope of $\bb{P}^2$ and $\mathring{P}$ be its interior. Let
	$$g_{\hbar}:=g_P+\hbar^{-1}\psi.$$
	where
	\begin{align*}
		g_P(x):=&\frac{1}{2}\left(x_1\log(x_1)+x_2\log(x_2)+(1-x_1-x_2)\log(1-x_1-x_2)\right),
		\\\psi(x):=&x_1^2+x_2^2+x_1x_2-x_1-x_2.
	\end{align*}
	The Legendre dual coordinates are denoted by
	$$\xi^i_{\hbar}:=\pd{g_{\hbar}}{x_i}.$$
	They are related to the original coordinates via
	$$\xi_{\hbar}^i=\xi^i+\hbar^{-1}\pd{\psi}{x_i}.$$
	Put
	$$w_{\hbar}^i:=e^{z^i_{\hbar}}:=e^{\xi^i_{\hbar}+\sqrt{-1}\check{y}^i},\quad i=1,2.$$
	A straightforward calculation shows that
	\begin{align*}
		Hess(g_P+\hbar^{-1}\psi)>&0,
		\\\det(Hess(g_P+\hbar^{-1}\psi))=&\frac{1}{\alpha_{\hbar}(x)x_1x_2(1-x_1-x_2)},
	\end{align*}
	for some smooth function $\alpha_{\hbar}:P\to\bb{R}$ so that if we choose $\hbar>0$ small enough, $\alpha_{\hbar}(x)>0$, for all $x\in\mathring{P}$. These are precisely the compatibility conditions stated in \cite{Ab1, Ab2}, which guarantee the complex coordinates $w_{\hbar}^i$ can be extended to $U_0\subset\bb{P}^2$. Thus, we get a family of complex manifolds $\{X_{\hbar}\}_{\hbar>0}$. Each member of this family can be identified with $\bb{P}^2$ via
	$$\iota_{\hbar}:(w_{\hbar}^1,w_{\hbar}^2)\mapsto [1:w_{\hbar}^1:w_{\hbar}^2].$$
	We define
	$$\omega_{\hbar}:=\iota_{\hbar}^*\omega_{FS}$$
	and the associated connection by $\nabla^{\hbar}$, which is given by
	$$
	\nabla^{\hbar}=d-\frac{1}{1+|w_{\hbar}|^2}\left[
	\begin{pmatrix}
	|w_{\hbar}^1|^2-|w_{\hbar}^2|^2-1 & |w_{\hbar}^2|^2
	\\0 & |w_{\hbar}^1|^2
	\end{pmatrix}dz_{\hbar}^1+\begin{pmatrix}
	|w_{\hbar}^2|^2 & 0
	\\|w_{\hbar}^1|^2 & |w_{\hbar}^2|^2-|w_{\hbar}^1|^2-1
	\end{pmatrix}dz_{\hbar}^2\right],
	$$
	under the frame $\{\pd{}{z_{\hbar}^i}\}$ and the coordinates $\{z_{\hbar}^i\}$. We want to compute
	$$\lim_{\hbar\to 0}\nabla^{\hbar}_{\pd{}{z_{\hbar}^i}}\pd{}{z_{\hbar}^j}=\lim_{\hbar\to 0}\sum_{k=1}^2\Gamma_{ij}^k(\hbar)\pd{}{z_{\hbar}^k}.$$
	First of all, it is clear that
	$$\pd{}{z_{\hbar}^i}=\frac{1}{2}\left(\sum_{j=1}^2Hess(\hbar^{-1}\psi+g_P)^{ij}\pd{}{x^j}-\sqrt{-1}\pd{}{\check{y}^j}\right)\to -\frac{\sqrt{-1}}{2}\pd{}{\check{y}^i}.$$
	at every point as $\hbar\to 0$.	Put $g_i:=\pd{g_P}{x_i}$ and $\psi_i:=\pd{\psi}{x_i}$. Note that 
	\begin{align*}
		\psi_1=&\,2x_1+x_2-1,
		\\\psi_2=&\,x_1+2x_2-1
	\end{align*}
	and $\psi_1-\psi_2=x_1-x_2$. To compute $\displaystyle{\lim_{\hbar\to 0}}\Gamma_{ij}^k(\hbar)$, we decompose $P$ into three pieces
	\begin{align*}
		P_0:=& \,P\cap\{\psi_1\leq 0,\psi_2\leq 0\},
		\\P_1:=& \,P\cap\{\psi_1\geq 0,\psi_1\geq\psi_2\},
		\\ P_2:=& \,P\cap\{\psi_2\geq 0,\psi_2\geq\psi_1\}.
	\end{align*}
	For $x\in\mathring{P}_0$, we have
	\begin{align*}
		\lim_{\hbar\to 0}\frac{1}{1+|w_{\hbar}|^2}=&\lim_{\hbar\to 0}\frac{1}{1+e^{2 g_1}e^{2\hbar^{-1}\psi_1}+e^{2 g_2}e^{2\hbar^{-1}\psi_2}}=1
		\\\lim_{\hbar\to 0}\frac{|w^1_{\hbar}|^2}{1+|w_{\hbar}|^2}=&\lim_{\hbar\to 0}\frac{e^{2 g_1}e^{2\hbar^{-1}\psi_1}}{1+e^{2 g_1}e^{2\hbar^{-1}\psi_1}+e^{2 g_2}e^{2\hbar^{-1}\psi_2}}=0,
		\\\lim_{\hbar\to 0}\frac{|w^2_{\hbar}|^2}{1+|w_{\hbar}|^2}=&\lim_{\hbar\to 0}\frac{e^{2 g_2}e^{2\hbar^{-1}\psi_2}}{1+e^{2 g_1}e^{2\hbar^{-1}\psi_1}+e^{2 g_2}e^{2\hbar^{-1}\psi_2}}=0,
	\end{align*}
	as $\psi_1,\psi_2<0$. For $x\in\mathring{ P}_1$, we have
	\begin{align*}
		\lim_{\hbar\to 0}\frac{1}{1+|w_{\hbar}|^2}=&\lim_{\hbar\to 0}\frac{e^{-2\hbar^{-1}\psi_1}}{e^{-2\hbar^{-1}\psi_1}+e^{2g_1}+e^{2 g_2}e^{2\hbar^{-1}(\psi_2-\psi_1)}}=0,
		\\\lim_{\hbar\to 0}\frac{|w^1_{\hbar}|^2}{1+|w_{\hbar}|^2}=&\lim_{\hbar\to 0}\frac{e^{2 g_1}}{e^{-2\hbar^{-1}\psi_1}+e^{2 g_1}+e^{2 g_2}e^{2\hbar^{-1}(\psi_2-\psi_1)}}=1,
		\\\lim_{\hbar\to 0}\frac{|w^2_{\hbar}|^2}{1+|w_{\hbar}|^2}=&\lim_{\hbar\to 0}\frac{e^{ 2g_2}}{e^{-2\hbar^{-1}\psi_2}+e^{2 g_1}e^{2\hbar^{-1}(\psi_1-\psi_2)}+e^{2 g_2}}=0.
	\end{align*}
	Similarly, we have, for $x\in\mathring{P}_2$,
	\begin{align*}
		\lim_{\hbar\to 0}\frac{1}{1+|w_{\hbar}|^2}=&0
		\\\lim_{\hbar\to 0}\frac{|w^1_{\hbar}|^2}{1+|w_{\hbar}|^2}=&0,
		\\\lim_{\hbar\to 0}\frac{|w^2_{\hbar}|^2}{1+|w_{\hbar}|^2}=&1.
	\end{align*}
	The potential function $\phi$ of the Fubini-Study metric defines an isomorphism $d\phi:N_{\bb{R}}\to\mathring{P}$, which is known as the \emph{Legendre transform}. Since $d\phi$ maps $\mathring{\sigma}_i$ to $\mathring{P}_i$, we have a singular connection $\nabla_{FS}^{trop}$ on the total space $TN_{\bb{R}}$, define by
	\begin{align*}
		\nabla_{FS}^{trop}:=\left\{
		\begin{array}{ll}
			d -\sqrt{-1}\begin{pmatrix}
				-1 & 0\\
				0 & 0
			\end{pmatrix}d\check{y}^1-\sqrt{-1}\begin{pmatrix}
				0 & 0\\
				0 & -1
			\end{pmatrix}d\check{y}^2 & \text{ if }\xi\in\mathring{\sigma}_0,
			\\d-\sqrt{-1}\begin{pmatrix}
				1 & 0
				\\0 & 1
			\end{pmatrix}d\check{y}^1-\sqrt{-1}\begin{pmatrix}
				0 & 0
				\\1 & -1
			\end{pmatrix}d\check{y}^2
			&\text{ if }\xi\in\mathring{\sigma}_1,
			\\ d-\sqrt{-1}\begin{pmatrix}
				-1 & 1
				\\0 & 0
			\end{pmatrix}d\check{y}^1-\sqrt{-1}\begin{pmatrix}
				1 & 0
				\\0 & 1
			\end{pmatrix}d\check{y}^2 &\text{ if }\xi\in\mathring{\sigma}_2,
		\end{array}
		\right.
	\end{align*}
	with respective to the frame $\{\sqrt{-1}\pd{}{\check{y}^i}\}$. we can diagonalize the two non-diagonal matrices by
	\begin{align*}
		\begin{pmatrix}
			1 & 0
			\\-1 & 1
		\end{pmatrix}\begin{pmatrix}
			0 & 0
			\\1 & -1
		\end{pmatrix}\begin{pmatrix}
			1 & 0
			\\1 & 1
		\end{pmatrix}=&\,
		\begin{pmatrix}
			0 & 0
			\\0 & -1
		\end{pmatrix},\\
		\begin{pmatrix}
			1 & -1
			\\0 & 1
		\end{pmatrix}
		\begin{pmatrix}
			-1 & 1
			\\0 & 0
		\end{pmatrix}
		\begin{pmatrix}
			1 & 1
			\\0 & 1
		\end{pmatrix}=&\,
		\begin{pmatrix}
			-1 & 0
			\\0 & 0
		\end{pmatrix}.
	\end{align*}
	These amount to a gauge transform of $\nabla_{FS}^{trop}$. Hence with respective to the new frame,
	\begin{align}\label{eqn:trop_lim_fan}
		\nabla_{FS}^{trop}=\left\{
		\begin{array}{ll}
			d -\sqrt{-1}\begin{pmatrix}
				-1 & 0\\
				0 & 0
			\end{pmatrix}d\check{y}^1-\sqrt{-1}\begin{pmatrix}
				0 & 0\\
				0 & -1
			\end{pmatrix}d\check{y}^2 & \text{ if }\xi\in\mathring{\sigma}_0,
			\\d-\sqrt{-1}\begin{pmatrix}
				1 & 0
				\\0 & 1
			\end{pmatrix}d\check{y}^1-\sqrt{-1}\begin{pmatrix}
				0 & 0
				\\0 & -1
			\end{pmatrix}d\check{y}^2
			&\text{ if }\xi\in\mathring{\sigma}_1,
			\\ d-\sqrt{-1}\begin{pmatrix}
				-1 & 0
				\\0 & 0
			\end{pmatrix}d\check{y}^1-\sqrt{-1}\begin{pmatrix}
				1 & 0
				\\0 & 1
			\end{pmatrix}d\check{y}^2 &\text{ if }\xi\in\mathring{\sigma}_2,
		\end{array}
		\right.
	\end{align}
	
	\begin{remark}
		It is straightforward to show that the above gauge change is the $\hbar\to 0$ limit of the (non-holomorphic) change of frame
		$$H_{\hbar}:\begin{cases}
		\pd{}{z_{\hbar}^1}\mapsto \pd{}{z_{\hbar}^1}-\frac{|w_{\hbar}^1|^2}{1+|w_{\hbar}|^2}\pd{}{z_{\hbar}^2},\\
		\pd{}{z_{\hbar}^2}\mapsto \pd{}{z_{\hbar}^2}-\frac{|w_{\hbar}^2|^2}{1+|w_{\hbar}|^2}\pd{}{z_{\hbar}^1}.
		\end{cases}$$
		One can easily check that $dH_{\hbar}\cdot H_{\hbar}^{-1}\to 0$ as $\hbar\to 0$ and obtain (\ref{eqn:trop_lim_fan}).
	\end{remark}
	
	Now we construct a tropical Lagrangian multi-section from $\nabla_{FS}^{trop}$. For each cone $\sigma_k$, we let $\sigma_k^{\pm}$ be two copies of $\sigma_k$. Define six linear functions $\varphi_k^{\pm}:\sigma_k^{\pm}\to\bb{R}$ by
	\begin{align*}
		\varphi_0^-:&\,(\xi^1,\xi^2)\mapsto -\xi^1,
		\\\varphi_1^+:&\,(\xi^1,\xi^2)\mapsto \xi^1,
		\\\varphi_2^-:&\,(\xi^1,\xi^2)\mapsto \xi^2.
		\\\varphi_0^+:&\,(\xi^1,\xi^2)\mapsto -\xi^2,
		\\\varphi_1^-:&\,(\xi^1,\xi^2)\mapsto \xi^1-\xi^2,
		\\\varphi_2^+:&\,(\xi^1,\xi^2)\mapsto \xi^2-\xi^1
	\end{align*}
	We obtain a topological space $L$ by gluing $\sigma^{\pm}_0$ with $\sigma^{\mp}_1$ and $\sigma^{\mp}_2$ along $v_1$ and $v_2$, respectively, and glue $\sigma^{\pm}_1$ with $\sigma^{\mp}_2$ along $v_0$. The topological space $L$ is homeomorphic to $N_{\bb{R}}$ and by choosing a branch cut, say along $v_0$, it is easy to see that the projection map $\pi:L\to|\Sigma|\cong N_{\bb{R}}$ given by mapping $\sigma_k^{\pm}\to\sigma_k$ can be identified with the square map $z\mapsto z^2$ on $\bb{C}$. Moreover, $\{\varphi^{\pm}_k\}$ glue to a continuous piecewise linear function $\varphi$ on $L$. See Figure \ref{fig:TP2}. If we take the ``trace" of $\varphi$, we obtain a piecewise linear function that defines the line bundle $\cu{O}(3)$, which is of course isomorphic to $\det(T_{\bb{P}^2})$ as a holomorphic line bundle.
	\begin{figure}[H]
		\centering
		\includegraphics[width=120mm]{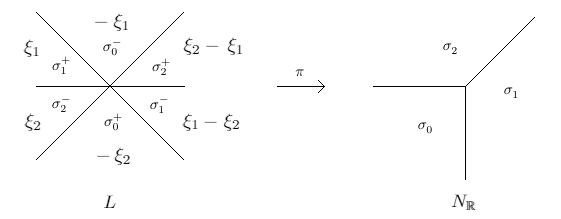}
		\caption{The tropical Lagrangian $\bb{L}$.}
		\label{fig:TP2}
	\end{figure}
	
	In summary, we obtain
	
	\begin{proposition}
		The data $\bb{L}:=(L,\pi,\varphi)$ defines a tropical Lagrangian multi-section.
	\end{proposition}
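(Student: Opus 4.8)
The plan is to verify the three defining properties (a), (b) and (c) of Definition~\ref{def:trop_lag} one at a time, with $B = N_{\bb{R}}$, $n = 2$ and $r = 2$.

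\emph{Properties (a) and (b).} First I would check that the space $L$, obtained by gluing the six closed sectors $\sigma_k^{\pm}$ along the prescribed rays, is a topological $2$-manifold and that $\pi$ is a branched double cover. Away from the common cone point, every edge identification matches a half-disc neighbourhood in one sector to a half-disc neighbourhood in the adjacent one, so $L$ minus the cone point is a surface. Around the cone point, the three gluing rules chain the six sectors into a single cyclic word (the precise cyclic order being forced once one requires, as below, that the glued $\varphi$ be continuous; cf.\ Figure~\ref{fig:TP2}), so a small punctured neighbourhood of the cone point is a circle and $L$ is homeomorphic to $\bb{R}^2 \cong N_{\bb{R}}$. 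Under this identification the projection $\pi$, sending each $\sigma_k^{\pm}$ onto $\sigma_k$, wraps the plane twice around the origin and is identified with the branched double cover $z \mapsto z^2$ of $\bb{C}$. Hence $\pi$ is a degree $2$ covering map with branch locus $S = \{0\} \subset N_{\bb{R}}$ and ramification locus $S' = \pi^{-1}(0)$, a single point; both are locally closed submanifolds of codimension $2$, as required. Over $L \setminus S'$ the map $\pi$ restricts to an honest covering of $N_{\bb{R}} \setminus \{0\}$, and one pulls back the standard integral affine structure of $N_{\bb{R}}$ along this local homeomorphism; this is legitimate because that affine structure, restricted to $N_{\bb{R}} \setminus \{0\}$, has trivial monodromy.

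\emph{Property (c).} The six functions $\varphi_k^{\pm}$ are linear with linear parts $(0,0)$, $(0,0)$, $(2,0)$, $(1,0)$, $(0,1)$, $(0,2)$, all lying in $\bb{Z}^2$. Next I would check that they glue to a single-valued continuous function $\varphi$ on $L$: along the rays $v_1$ and $v_2$ one coordinate vanishes and there the pieces meeting across the ray ($0$, $\xi^1$, $2\xi^1$, $\xi^2$, $2\xi^2$) all vanish, while along $v_0$ one has $\xi^1 = \xi^2$ and the two pieces meeting across it ($2\xi^1$ with $2\xi^2$, resp.\ $\xi^1$ with $\xi^2$) take equal values. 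Since on $L \setminus S'$ the coordinate change between any two affine charts is an integral affine transformation pulled back from $N_{\bb{R}}$, and the local pieces of $\varphi$ have integral slope, the local representatives of $\varphi$ on overlapping charts differ by a function of the form $\inner{m,\xi} + b$ with $m \in \bb{Z}^2$ and $b \in \bb{R}$; this is exactly condition (c). Putting the three items together, $\bb{L} = (L,\pi,\varphi)$ is a rank $2$ tropical Lagrangian multi-section over $N_{\bb{R}}$; as a consistency check, the trace of $\varphi$ equals $0$, $3\xi^1$, $3\xi^2$ on $\sigma_0$, $\sigma_1$, $\sigma_2$, the supporting function of $\cu{O}(3D_0) \cong \det(T_{\bb{P}^2})$.

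All of the verifications here are elementary, so the only real obstacle is the combinatorial bookkeeping in (a)--(b): one must confirm that the three gluing rules are mutually consistent, so that the six sectors assemble into exactly one $6$-cycle around the branch point --- this connectedness is precisely what makes $\pi$ a connected degree $2$ branched cover rather than a pair of disjoint sheets --- and that this arrangement is compatible with the prescribed values of the $\varphi_k^{\pm}$, so that the glued $\varphi$ is continuous across each of the three rays. Once the cyclic order of the sectors is pinned down, properties (a), (b) and (c) all follow by direct inspection.
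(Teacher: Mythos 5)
Your verification is correct and follows essentially the same route as the paper, which states this proposition as a summary of the preceding construction rather than giving a separate proof: one checks that the six sectors assemble into a single plane with $\pi$ identified with $z\mapsto z^2$ (so the branch locus is $\{0\}$, of codimension $2$), that $\varphi$ is continuous across the three rays, and that its local pieces have integral slope, so condition (c) of Definition~\ref{def:trop_lag} holds trivially for the pulled-back affine structure on $L\setminus S'$. One small imprecision: continuity of $\varphi$ only pins down the gluing along $v_0$ (all the relevant linear pieces vanish identically along $v_1$ and $v_2$), so the single $6$-cycle arrangement is a choice made in the construction (cf.\ Figure~\ref{fig:TP2}) rather than a consequence of continuity --- but since Definition~\ref{def:trop_lag} does not require $L$ to be connected, this does not affect the conclusion.
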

	
	\begin{remark}
		In \cite{branched_cover_fan}, instead of Hermitian structure, Payne used equivariant structure of $T_{\bb{P}^2}$ to construct the same tropical Lagrangian multi-section.
	\end{remark}
	
	\begin{remark}\label{rmk:trop_Lag}
		One should think of the topological space $L$ and the differential of the function $\varphi$ as the tropical limit of certain rank 2 Lagrangian multi-section of $p:Y\to N_{\bb{R}}$ that is mirror to $T_{\bb{P}^2}$. If we formally apply the SYZ transform to the Lagrangian multi-section $d\varphi:L\to Y$, we obtain (\ref{eqn:trop_lim_fan}). See \cite{CS_SYZ_imm_Lag} for precise definition of SYZ transform of unbranched Lagrangian multi-sections.
	\end{remark}

	\subsection{Reconstructing $T_{\bb{P}^2}$}\label{sec:reconstruction}
	
	As in Remark \ref{rmk:trop_Lag}, the function $\varphi$ should be thought of as the potential function of a Lagrangian multi-section and thus ``$d\varphi$" is the Lagrangian itself. For each $\sigma_k^{\pm}$, let $\cu{L}_k^{\pm}$ be the trivial line bundle $\cu{O}_{U_k}$. Put
	\begin{align*}
		\cu{E}_0:=\cu{L}_0^-\oplus\cu{L}_0^+,\\
		\cu{E}_1:=\cu{L}_1^+\oplus\cu{L}_1^-,\\
		\cu{E}_2:=\cu{L}_2^-\oplus\cu{L}_2^+.
	\end{align*}
	Let $e_k^{\pm}$ be a global frame of $\cu{L}_k^{\pm}$. They give a natural ordered frame for $\cu{E}_k$. According to the gluing of $(L,\varphi)$, we should glue these ordered frame as follows
	\begin{align*}
		(e_0^-,e_0^+)&\leftrightarrow (e_1^+,e_1^-),
		\\(e_1^+,e_1^-)&\leftrightarrow (e_2^-,e_1^+),
		\\(e_2^-,e_2^+)&\leftrightarrow (e_0^+,e_0^-).
	\end{align*}
	The composition of these maps is the monodromy $e_0^{\pm}\mapsto e_0^{\mp}$ of the unbranced covering map $\pi|_{L\backslash\pi^{-1}(0)}:L\backslash\pi^{-1}(0)\to N_{\bb{R}}\backslash\{0\}$. In view of the SYZ transform defined in \cite{CS_SYZ_imm_Lag}, we need to weight each gluing by the exponential of the difference of local potential functions, which is an affine function. Motivated by this, we should look at the monomial associated to the minus of the slope difference in our case. This gives the following set of naive transition functions
	$$\tau^{sf}_{10}:=\begin{pmatrix}
	\frac{a_0}{(w_0^1)^2} & 0
	\\0 & \frac{b_0}{w_0^1}
	\end{pmatrix},
	\tau^{sf}_{21}:=\begin{pmatrix}
	\frac{b_1}{w_1^2} & 0
	\\0 & \frac{a_1}{(w_1^2)^2}
	\end{pmatrix},
	\tau^{sf}_{02}:=\begin{pmatrix}
	0 & \frac{b_2}{w_2^0}
	\\\frac{a_2}{(w_2^0)^2} & 0
	\end{pmatrix},$$
	for some constants $a_i,b_i\in\bb{C}^{\times}$. Since $tr(\varphi)$ is a piecewise linear function defining $\cu{O}(3)$, we should impose the condition
	$$\prod_{i=0}^2a_ib_i=-1.$$
	This condition can be regarded as a rank 1 system on $L\backslash\pi^{-1}(0)$ with monodromy $-1$ as follows. For each maximal cone $\sigma_i^{\pm}\subset L$, choose a neighborhood $V_i^{\pm}\subset L\backslash\pi^{-1}(0)$ of $\sigma_i^{\pm}\backslash\pi^{-1}(0)$ such that $V_i^{\pm}\cap V_j^{\pm}$ is non-empty if and only if $\sigma_i^{\pm}\cap\sigma_j^{\pm}\neq\pi^{-1}(0)$. Hence, by our convention, only $V_i^+\cap V_j^-$ are non-empty, for $i,j$ distinct. Define a rank 1 local system $\cu{L}$ on $L\backslash\pi^{-1}(0)$ by setting its transition functions to be
	\begin{align*}
		a_0 & \text{ on }V_0^-\cap V_1^+,
		\\a_1 & \text{ on }V_1^+\cap V_2^-,
		\\a_2 & \text{ on }V_2^-\cap V_0^+,
		\\b_0 & \text{ on }V_0^+\cap V_1^-,
		\\b_1 & \text{ on }V_1^-\cap V_2^+,
		\\b_2 & \text{ on }V_2^+\cap V_0^-.
	\end{align*}
	The condition $\prod_{i=0}^2a_ib_i=-1$ simply says that $\cu{L}$ has monodromy $-1$. So the constants in the naive transition functions are coupled with the local system data. However, it is clear that $\{\tau_{ij}^{sf}\}$ doesn't satisfy the cocycle condition. This is due to the non-trivial affine monodromy of the branched covering map $\pi:L\to N_{\bb{R}}$.
	
	\begin{remark}
		Although $\{\tau_{ij}^{sf}\}$ does not form a vector vector bundle on $\bb{P}^2$, they do form a rank 2 bundle $\cu{E}^{sf}$ on the singular space $D_0\cup D_1\cup D_2$ because each divisor $D_k$ is covered by the two charts $U_i\cap D_k,U_j\cap D_k$ for $i,j,k=0,1,2$ begin distinct and there are no triple intersections, so the cocycle condition is vacuous. Since the torus $(\bb{C}^{\times})^2\subset\bb{P}^2$ corresponds to the point $0\in N_{\bb{R}}$, which is exactly the branched point of $\pi:L\to N_{\bb{R}}$, from the SYZ transform perspective, $\cu{E}^{sf}$ should be regarded as the mirror bundle of $\bb{L}_0:=(L\backslash\pi^{-1}(0),\pi|_{L\backslash\pi^{-1}(0)},\varphi|_{L\backslash\pi^{-1}(0)})$ and thus deserve the name \emph{semi-flat mirror bundle of $\bb{L}$}.
	\end{remark}
	
	In order to obtain a consistent gluing, we have to modify each $\tau_{ij}^{sf}$ by an invertible factor. We choose the correction factors to be
	\begin{align*}
		\Theta_{10}:=&I+\begin{pmatrix}
			0 & 0
			\\-a_0b_1a_2\frac{w^2_0}{w^1_0} & 0
		\end{pmatrix}\in Aut\left(\cu{E}_0|_{U_{10}}\right),
		\\\Theta_{21}:=&I+\begin{pmatrix}
			0 & -a_0a_1b_2\frac{w^0_1}{w^2_1}
			\\0  & 0
		\end{pmatrix}\in Aut\left(\cu{E}_1|_{U_{21}}\right),
		\\\Theta_{02}:=&I+\begin{pmatrix}
			0 & 0
			\\-b_0a_1a_2\frac{w^1_2}{w^0_2} & 0
		\end{pmatrix}\in Aut\left(\cu{E}_2|_{U_{02}}\right),
	\end{align*}
	For each $j=0,1,2$, the factor $\Theta_{ij}$ is written in terms of the frame on $U_j$ and is only defined on $U_{ij}$. Define $\tau_{ij}':=\tau_{ij}^{sf}\Theta_{ij}$. A straightforward calculation shows that
	
	\begin{proposition}\label{prop:glued} With the condition $\prod_ia_ib_i=-1$, we have
		\begin{equation}\label{eqn:correct_gluing}
			\tau_{02}'\tau_{21}'\tau_{10}'=I.
		\end{equation}
	\end{proposition}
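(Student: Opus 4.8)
\emph{Proof strategy.} Identity~(\ref{eqn:correct_gluing}) is an equality of $2\times2$ matrices whose entries are Laurent monomials in toric coordinates, so the plan is to verify it by a direct computation after pulling all three factors back to a single chart. I would work on $U_0$ with coordinates $w^1:=w_0^1,\ w^2:=w_0^2$ and use the toric coordinate changes $w_1^0=1/w^1$, $w_1^2=w^2/w^1$, $w_2^0=1/w^2$, $w_2^1=w^1/w^2$. Under these the monomial ratios appearing in the correction factors collapse, $w_1^0/w_1^2=1/w^2$ and $w_2^1/w_2^0=w^1$, so that $\Theta_{21}$ and $\Theta_{02}$ become unipotent lower-triangular matrices over $U_0$; multiplying out $\tau'_{ij}=\tau^{sf}_{ij}\Theta_{ij}$ then produces three explicit matrices with Laurent-monomial entries, $\tau'_{10}$ and $\tau'_{21}$ being lower-triangular and $\tau'_{02}$ carrying the anti-diagonal shape of $\tau^{sf}_{02}$ with an extra $(1,1)$-entry contributed by $\Theta_{02}$.

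Next I would evaluate the product $(\tau'_{02}J^{-1})\,\tau'_{21}\,(J\tau'_{10})$ in stages. Multiplying $\tau'_{02}$ on the right by $J^{-1}$ turns it into an upper-triangular matrix, and $J\tau'_{10}$ has the lower row $(a_0(w^1)^{-2},0)$. The middle product $(\tau'_{02}J^{-1})\,\tau'_{21}$ then has $(1,1)$-entry equal to $-(w^1)^2(w^2)^{-1}\bigl(a_1b_2+(a_0b_0b_1a_2)^{-1}\bigr)$, which vanishes \emph{precisely} because of the hypothesis $\prod_i a_ib_i=-1$, so that this middle product collapses to a matrix with zero $(1,1)$-entry. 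Right-multiplying by $J\tau'_{10}$ then yields the identity matrix: the diagonal entries reduce to $a_0^{-1}(w^1)^2\cdot a_0(w^1)^{-2}=1$ and $b_0^{-1}w^1\cdot b_0(w^1)^{-1}=1$, while the off-diagonal entries vanish --- the $(2,1)$-entry being the difference $-a_0b_1a_2\,w^2(w^1)^{-1}+a_0a_2b_1\,w^2(w^1)^{-1}=0$ that the off-diagonal term of $\Theta_{10}$ was designed to produce. This establishes~(\ref{eqn:correct_gluing}).

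The argument is entirely mechanical, so there is no conceptual obstacle; the only care needed is bookkeeping --- each $\Theta_{ij}$ (and each $\tau^{sf}_{ij}$) is written in the frame on $U_j$, so one must convert everything consistently to the $U_0$ chart, and must not interchange $J$ with $J^{-1}$, either of which slips would spoil the identity. In the write-up it is worth recording that the normalization $\prod_i a_ib_i=-1$ enters exactly once, in the vanishing of that $(1,1)$-entry, so that its role --- forced by $tr(\varphi)$ being the piecewise-linear function of $\cu{O}(3)\cong\det T_{\bb{P}^2}$ --- is transparent. One can also cross-check the resulting $\tau'_{10}$ against the transition function $\tau_{10}$ listed in Section~\ref{sec:P2}: the two agree on the nose once $a_0=-1$, $b_0=1$, $b_1a_2=-1$, which both confirms the algebra and anticipates Theorem~\ref{thm:correct_mirror}.
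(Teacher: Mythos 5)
Your computation is correct and is exactly the verification the paper leaves implicit: pulling all factors back to the $U_0$ coordinates, the $(1,1)$-entry of $(\tau_{02}'J^{-1})\tau_{21}'$ is indeed $-(w^1)^2(w^2)^{-1}\bigl(a_1b_2+(a_0b_0b_1a_2)^{-1}\bigr)$, which vanishes precisely when $\prod_i a_ib_i=-1$, and the remaining entries multiply out to $I$ as you describe. No gaps; this matches the paper's (omitted) direct check.
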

	
	Thus we obtain a rank 2 holomorphic vector bundle $\cu{E}$ on $\bb{P}^2$, which we called the \emph{instanton-corrected mirror} of the tropical Lagrangian multi-section $\bb{L}$. Furthermore, we have
	
	\begin{theorem}\label{thm:correct_mirror}
		The instanton-corrected mirror of the tropical Lagrangian multi-section $\bb{L}$ is isomorphic to the holomorphic tangent bundle $T_{\bb{P}^2}$ of $\bb{P}^2$.
	\end{theorem}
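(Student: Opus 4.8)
The plan is to exhibit an explicit holomorphic bundle isomorphism between $\cu{E}$ and $T_{\bb{P}^2}$ by comparing transition functions on each double overlap $U_{ij}$. Both bundles are given by $\{\bb{C}^2\}$-valued \v{C}ech $1$-cocycles with respect to the cover $\{U_0,U_1,U_2\}$: for $T_{\bb{P}^2}$ the cocycle $\{\tau_{10},\tau_{21},\tau_{02}\}$ is written down in Section \ref{sec:P2}, and for $\cu{E}$ the cocycle is $\{J\tau_{10}',\tau_{21}',\tau_{02}'J^{-1}\}$ by Proposition \ref{prop:glued}. Two such cocycles define isomorphic bundles iff there exist holomorphic $g_k:U_k\to GL_2(\cu{O})$ with $g_i\,(\text{cocycle of }\cu{E})_{ij}\,g_j^{-1}=(\tau_{ij}$ of $T_{\bb{P}^2})$. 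So the real content is to solve this coboundary problem, which comes down to three matrix identities on $U_{10},U_{21},U_{02}$ respectively.

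\textbf{Steps.} First I would multiply out $\tau_{ij}':=\tau_{ij}^{sf}\Theta_{ij}$ explicitly on each overlap using the formulas for $\tau_{ij}^{sf}$ and $\Theta_{ij}$; each is a lower- (or anti-) triangular $2\times 2$ matrix whose entries are monomials in the $w$-coordinates, with coefficients built from the constants $a_i,b_i$. Second, I would rewrite $\cu{E}$'s cocycle $\{J\tau_{10}',\tau_{21}',\tau_{02}'J^{-1}\}$ in the coordinates $(w_0^1,w_0^2)$, $(w_1^0,w_1^2)$, $(w_2^0,w_2^1)$, using the monomial change-of-coordinates relations $w^i_j$ vs.\ $w^i_k$ coming from the toric structure (e.g.\ $w_1^0=1/w_0^1$, $w_1^2=w_0^2/w_0^1$, etc.). Third, I would make an ansatz for $g_k$ as constant diagonal matrices $g_k=\mathrm{diag}(\alpha_k,\beta_k)$ possibly composed with the swap; plugging into the three coboundary equations reduces them to a handful of scalar equations relating $\alpha_k,\beta_k$ to the $a_i,b_i$. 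The constraint $\prod_i a_i b_i=-1$ should make this system consistent. Fourth, having found the $g_k$, I would check that the off-diagonal monomial entries match (this is where the particular exponents $-1/(w_0^1)^2$ vs.\ $b_0/w_0^1$ etc.\ and the factors $w_0^2/w_0^1$ in $\Theta_{10}$ are forced to coincide with the entries of $\tau_{10}$), completing the isomorphism. Finally, I would note that since both cocycles are even equivariant for the $(\bb{C}^\times)^2$-action, one may additionally arrange $g_k$ equivariant, giving an equivariant isomorphism; and that taking determinants recovers the already-checked fact $\det\cu{E}\cong\cu{O}(3)\cong\det T_{\bb{P}^2}$, a useful sanity check.

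\textbf{Main obstacle.} The bookkeeping with the two sheets and the local system $\cu{L}$ — i.e.\ keeping track of where the $J$ and $J^{-1}$ sit and how conjugating by $J$ swaps the two summands $\cu{O}(2D_0)$ and $\cu{O}(D_0)$ relative to the naive order — is the part most likely to hide a sign or an index error, and it is precisely the step that makes the answer indecomposable rather than a direct sum of line bundles. A cleaner alternative that I would keep in reserve: instead of guessing $g_k$, argue structurally. One knows $\det\cu{E}\cong\cu{O}(3)$ and $c_1,c_2$ can be read off the cocycle; $T_{\bb{P}^2}$ is the unique (up to iso) stable bundle with those Chern numbers and that determinant (it is exceptional, being a tilting summand of $\bb{P}^2$), so it suffices to show $\cu{E}$ is stable and has the right $c_2$. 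Stability would follow from showing $\cu{E}$ has no sub-line-bundle of degree $\geq 2$, which can be checked against the explicit gluing (any global section of $\cu{E}(-1)$ would have to be compatible with the off-diagonal correction terms and one shows none exists). I expect the direct coboundary computation to be short enough that the structural route is unnecessary, but it provides a conceptually transparent fallback.
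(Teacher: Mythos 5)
Your proposal is correct and follows essentially the same route as the paper: the proof there writes down explicit constant matrices $f_0=\mathrm{diag}(1,a_0b_1a_2)$ and anti-diagonal $f_1,f_2$ (exactly your ``constant diagonal composed with the swap'' ansatz) and verifies the three coboundary identities $\left(\tau_{02}'J^{-1}\right)f_2=f_0\tau_{02}$, $\tau_{21}'f_1=f_2\tau_{21}$, $\left(J\tau_{10}'\right)f_0=f_1\tau_{10}$ using $\prod_i a_ib_i=-1$. The structural fallback via stability is not needed.
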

	\begin{proof}
		We define $f:T_{\bb{P}^2}\to\cu{E}$ by
		\begin{align*}
			f|_{U_0}:=f_0:=& \begin{pmatrix}
				1 & 0
				\\0 & a_0b_1a_2
			\end{pmatrix},
			\\f|_{U_1}:=f_1:=& \begin{pmatrix}
				-a_0 & 0
				\\0 & -a_1^{-1}b_2^{-1}
			\end{pmatrix},
			\\f|_{U_2}:=f_2:=& \begin{pmatrix}
				-a_0b_1 & 0
				\\0 & b_2^{-1}
			\end{pmatrix}.
		\end{align*}
		Using $\prod_ia_ib_i=-1$, one can check that
		$$\tau_{02}'f_2=f_0\tau_{02},\quad \tau_{21}' f_1=f_2\tau_{21},\quad \tau_{10}'f_0=f_1\tau_{10}.$$
		Hence $f$ defines an isomorphism.
	\end{proof}

	\begin{remark}\label{rmk:rmk_on_caustic}
		As pointed out by Fukaya \cite{Fukaya_asymptotic_analysis}, Section 6.4, the local system $\cu{L}$ (which he called an orientation twist) is related to the orientation of certain moduli space of holomorphic disks.
	\end{remark}

	\subsection{The wall-crossing factors}\label{sec:wall_crossing}
	
	In Section \ref{sec:reconstruction}, we have introduced three invertible factors
	\begin{align*}
		\Theta_{10}:=&I+\begin{pmatrix}
			0 & 0
			\\-a_0b_1a_2\frac{w^2_0}{w^1_0} & 0
		\end{pmatrix}\in Aut\left(\cu{E}_0|_{U_{10}}\right),
		\\\Theta_{21}:=&I+\begin{pmatrix}
			0 & -a_0a_1b_2\frac{w^0_1}{w^2_1}
			\\0  & 0
		\end{pmatrix}\in Aut\left(\cu{E}_1|_{U_{21}}\right),
		\\\Theta_{02}:=&I+\begin{pmatrix}
			0 & 0
			\\-b_0a_1a_2\frac{w^1_2}{w^0_2} & 0
		\end{pmatrix}\in Aut\left(\cu{E}_2|_{U_{02}}\right),
	\end{align*}
	to modify the naive transition functions $\tau_{ij}^{sf}$. In this section, we give a heuristic explanation about how $\Theta_{ij}$ are related to holomorphic disks bounded by the (conjecturally exists) mirror Lagrangian of $T_{\bb{P}^2}$.
	In terms of the coordinates $w^i=w_0^i$, each factor $\Theta_{ij}$ determines a \emph{Fourier mode}\footnote{By our convention, the Fourier mode of $e^{(m,z)}$ is $-m\in M$}\label{footnote:Fourier} $m_{ij}\in M$, where
	$$m_{10}:=(1,-1),\quad m_{21}:=(0,1),\quad m_{02}:=(-1,0).$$
	Let $n_{ij}\in N$ be the primitive integral tangent vector so that if we identify $N_{\bb{R}},M_{\bb{R}}$ with $\bb{R}^2$ and the natural pairing $\inner{-,-}$ with the standard inner product on $\bb{R}^2$, $\{n_{ij},m_{ij}\}$ forms an orientable orthonormal basis with respective to the standard volume form $dx\wedge dy$ on $\bb{R}^2$. Then we have
	$$n_{10}:=(-1,-1)\quad n_{21}:=(1,0),\quad n_{02}=(0,1),\quad.$$
	See Figure \ref{fig:wall}.
	\begin{figure}[H]
		\centering
		\includegraphics[width=120mm]{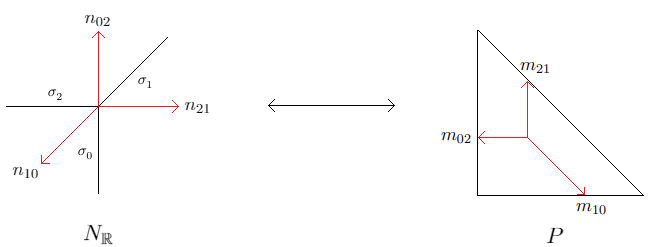}
		\caption{}
		\label{fig:wall}
	\end{figure}
	Hence the cocycle condition (\ref{eqn:correct_gluing}) can be understood as the wall-crossing diagram as shown in Figure \ref{fig:wall}. Furthermore, in view of \cite{HLZ19}, the walls of the mirror Lagrangian should concentrate on a small neighborhood of $\bigcup_{i\neq j}\bb{R}_{\geq 0}\inner{n_{ij}}$ as $\hbar\to 0$.
	
	The closed string wall-crossing phenomenon has been studied in \cite{KS_scattering, GPS_trop_vertex, CLM_scattering}. The wall-crossing factors are elements of the so called \emph{tropical vertex group}. They are responsible for correcting the semi-flat complex structure by Maslov index 0 disks bounded by those SYZ fibers over a wall. In our case, which is an open theory, the factors $\{\Theta_{ij}\}$ are responsible for correcting the ``semi-flat bundle" $\{\tau_{ij}^{sf}\}$ by non-trivial holomorphic disks bounded by SYZ fibers over walls and the mirror Lagrangian of $T_{\bb{P}^2}$. 
	
	We end this section by stating the following
	
	\begin{conjecture}\label{coj:existence}
		There exists a connected rank 2 Lagrangian multi-section $\bb{L}$ of the Lagrangian torus fibration $p:T^*N_{\bb{R}}/N\to N_{\bb{R}}$ so that for any $\epsilon>0$, there exists $\delta>0$ such that if $\hbar\in (0,\delta)$, the $\epsilon$-tubular neighborhood $U_{\epsilon}$ of 
		$$\bigcup_{i\neq j}\bb{R}_{\geq 0}\inner{n_{ij}}$$
		contains the walls of $\bb{L}$. Furthermore, as an object in the Fukaya-Seidel category of $T^*N_{\bb{R}}/N$, $\bb{L}$ is quasi-isomorphic to a cone between the zero section $L_0$ and the direct sum $L_1^{\oplus 3}$.
	\end{conjecture}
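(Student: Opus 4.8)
The plan is to split the statement into three parts: (i) a categorical identification of the sought-after object with an algebraic cone, (ii) the geometric realization of that cone by an honest special Lagrangian multi-section, and (iii) the asymptotic localization of its walls as $\hbar\to 0$. The categorical part is the most accessible. Granting homological mirror symmetry for $\bb{P}^2$ together with Theorem \ref{thm:correct_mirror}, the tangent bundle $T_{\bb{P}^2}$ is identified with a distinguished object $E$ of the Fukaya category of the Landau--Ginzburg model $(Y,W)$. The Euler sequence
$$0\to\cu{O}\to\cu{O}(1)^{\oplus 3}\to T_{\bb{P}^2}\to 0$$
exhibits $T_{\bb{P}^2}$ as $\mathrm{Cone}\!\left(\cu{O}\to\cu{O}(1)^{\oplus 3}\right)$, the connecting map being multiplication by the three homogeneous coordinates. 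Transporting this through the equivalence and using that $L_0,L_1$ (the sections of Section \ref{sec:P2} with $k=0,1$) are mirror to $\cu{O},\cu{O}(1)$, one obtains $E\cong\mathrm{Cone}\!\left(L_0\to L_1^{\oplus 3}\right)$ as a twisted complex. This already yields the ``quasi-isomorphic to a cone'' clause, once one checks that the degree-$0$ Floer class realizing the connecting map is the expected one: $HF^0(L_0,L_1)$ has a basis of three intersection points mirror to $\zeta_0,\zeta_1,\zeta_2$, and the Euler map selects the $i$-th point in the $i$-th summand.

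The serious content is to promote the twisted complex $\mathrm{Cone}(L_0\to L_1^{\oplus 3})$ to a single embedded, connected special Lagrangian. First I would realize the cone geometrically by Lagrangian surgery (in the sense of Fukaya--Oh--Ohta--Ono and Seidel) at the three intersection points selected above, after replacing $L_1^{\oplus 3}$ by three parallel Hamiltonian perturbations; this produces an immersed Lagrangian $\widetilde{\bb{L}}$ in the Hamiltonian isotopy class of the cone. A Floer-theoretic bookkeeping of the surgery cobordism must then show that $\widetilde{\bb{L}}$ is quasi-isomorphic to a \emph{connected rank $2$} multi-section with a single branch point over $0\in N_{\bb{R}}$; the reconciliation of sheet counts (naively $1+3$, but $3-1=2$ in $K$-theory because $L_0$ enters shifted) is exactly the cancellation produced by the surgery, and the resulting branch point should be modelled locally on $\overline{L}=\{(x^2,\bar x)\}$ of Remark \ref{rmk:rmk_on_caustic}. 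This fixes the conical model of the singularity that the special Lagrangian must carry.

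With the topological type and conical model fixed, the remaining step is to find a special Lagrangian representative in the isotopy class. Writing a rank $2$ multi-section near $0$ as a two-valued graph $y=dh$, the special Lagrangian condition for $\Omega_Y$ becomes the Harvey--Lawson equation $\sum_i\arctan\lambda_i(\mathrm{Hess}\,h)=\theta$, now to be solved in a two-valued (branched) sense with the prescribed $\overline{L}$-singularity at the origin. The natural tools are the analytic theory of special Lagrangians with isolated conical singularities (Joyce) and, alternatively, Lagrangian mean-curvature flow started from the smoothed tropical model of Section \ref{sec:trop_lag}; here $\overline{L}$ being itself a special Lagrangian cone in $\bb{C}^2$ provides the required asymptotic model. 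Finally, for the wall estimate I would run the degeneration $\hbar\to 0$ on the family $\{(X_{\hbar},\omega_{\hbar})\}$: the walls of $\bb{L}$ are the loci over which it bounds Maslov-index-$0$ holomorphic disks with the fibers of $p$, and an asymptotic analysis in the spirit of \cite{Fukaya_asymptotic_analysis}, combined with the concentration result of \cite{HLZ19}, should force these disk boundaries to collapse onto $\bigcup_{k=0}^2\bb{R}_{\geq 0}\inner{n_k}$, giving containment in $U_\epsilon$ for $\hbar$ small; the wall-crossing factors $\Theta_{ij}$ and their Fourier modes $m_k$ computed in Section \ref{sec:wall_crossing} then serve as the predicted disk counts to match.

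The main obstacle is the special Lagrangian existence of the third step: producing a branched, two-valued special Lagrangian with a prescribed conical singularity is a genuinely nonlinear and largely open analytic problem, and even granting existence, the precise localization of the Maslov-$0$ disks needed for the wall estimate requires control of the disk moduli uniform in $\hbar$. These are the reasons the statement is posed as a conjecture rather than a theorem.
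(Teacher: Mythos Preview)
The statement you are attempting to prove is explicitly a \emph{conjecture} in the paper, and the paper provides no proof of it. The only commentary the paper offers is the single sentence following the conjecture, observing that since $L_0$ and $L_1$ are SYZ-mirror to $\cu{O}$ and $\cu{O}(1)$, the statement is ``nothing but a symplecto-geometric analog of the Euler sequence for $\bb{P}^2$.'' There is therefore no paper proof against which to compare your proposal.

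That said, your outline is a reasonable heuristic roadmap, and you yourself correctly diagnose in the final paragraph why it does not constitute a proof: the existence of a branched special Lagrangian representative with prescribed conical singularity is a genuinely open analytic problem, and the uniform-in-$\hbar$ control of Maslov-$0$ disk moduli needed for the wall localization is not available. The categorical step (i), deriving the cone description from the Euler sequence via homological mirror symmetry, is the same motivation the paper gives. Steps (ii) and (iii) are plausible strategies but rest on results (Lagrangian surgery producing the correct rank-$2$ branched multi-section, Joyce-type existence for the $\overline{L}$ cone model, concentration of walls) that are not established here or in the cited literature for this specific setting. In short: there is no gap relative to the paper because the paper claims nothing beyond the conjecture itself, and your proposal is an honest sketch of what a proof would have to contain rather than a proof.
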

	
	As the SYZ transform of $L_0$ and $L_1$ is given by the structural sheaf $\cu{O}$ and the line bundle $\cu{O}(1)$, respectively, this conjecture is nothing but a symplecto-geometric analog of the Euler sequence for $\bb{P}^2$.

	\appendix
	
	\section{Local model for caustics}\label{sec:caustic}

	In this appendix, we give a review on Fukaya's local model on caustic points \cite{Fukaya_asymptotic_analysis}, Section 6.4.
	
	Let $B:=\bb{C}$ and $X:=\bb{C}^2$. Equip $X$ with the standard symplectic structure
	$$\omega=dx^1\wedge dy_1+dx^2\wedge dy_2$$
	and holomorphic volume form
	$$\Omega=dz_1\wedge dz_2,$$
	where $z_i=x^i+\sqrt{-1}y_i$ are the standard complex coordinates on $X$. After a hyperk\"ahler rotation, we have complex coordinates
	$$x=x^1+\sqrt{-1}x^2,\quad y=y_1-\sqrt{-1}y_2.$$
	Fukaya considered the Lagrangian
	$$\overline{L}:=\{(x^2,\bar{x})|x\in B\}\subset X$$
	With respective to $p:X\to B$, the projection onto the first coordinate, $\overline{L}$ is a special Lagrangian multi-section of rank 2. Parameterizing $\overline{L}$ in terms of polar coordinates:
	$$\overline{L}=\{(re^{\sqrt{-1}\theta},\sqrt{r}e^{-\sqrt{-1}\frac{\theta}{2}})\in\bb{C}^2:r\geq 0,\theta\in\bb{R}\}.$$
	Let $u:[0,1]\times[-1,1]\to X$ be given by
	$$u(s,t):=(sre^{\sqrt{-1}\theta},t\sqrt{sr}e^{-\sqrt{-1}\frac{\theta}{2}}).$$
	Define
	$$f_{\overline{L}}(x):=\int_{-1}^1\int_0^1u^*\omega.$$
	An elementary calculation shows that
	$$f_{\overline{L}}(x)=\frac{4}{3}r^{\frac{3}{2}}\cos\left(\frac{3\theta}{2}\right).$$
	
	\begin{proposition}
		There are precisely three gradient flow lines of $f_{\overline{L}}$ starting from the origin.
	\end{proposition}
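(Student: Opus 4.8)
The plan is to compute the gradient vector field of $f_{\overline{L}}(x) = \tfrac{4}{3}r^{3/2}\cos(3\theta/2)$ directly in polar coordinates and locate the rays along which the gradient flow emanating from the origin can travel. Since $B = \bb{C} = \bb{R}^2$ carries its standard Euclidean (equivalently, the induced affine/flat) metric, the gradient in polar coordinates $(r,\theta)$ is
\[
\nabla f_{\overline{L}} = \pd{f_{\overline{L}}}{r}\,\partial_r + \frac{1}{r^2}\pd{f_{\overline{L}}}{\theta}\,\partial_\theta = 2\sqrt{r}\cos\!\left(\frac{3\theta}{2}\right)\partial_r - \frac{2}{\sqrt{r}}\sin\!\left(\frac{3\theta}{2}\right)\partial_\theta,
\]
so that $|\nabla f_{\overline{L}}|^2 = 4r\cos^2(3\theta/2) + 4r\sin^2(3\theta/2)\cdot\frac{1}{r^2}\cdot r^2$—here one must be careful with the metric normalization, but in any case the key structural point is that the flow leaves the origin only along directions where the angular component $\pd{f_{\overline{L}}}{\theta}$ vanishes, i.e. where $\sin(3\theta/2) = 0$.

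First I would observe that a gradient trajectory starting at the origin must, for small time, stay close to a ray $\theta = \theta_0$ on which $f_{\overline{L}}$ is increasing away from $0$ along that ray; otherwise the angular drift $-\tfrac{2}{\sqrt r}\sin(3\theta/2)\,\partial_\theta$ would push it away. Setting $\sin(3\theta/2) = 0$ gives $\theta = 2\pi k/3$ for $k \in \bb{Z}$, which yields three distinct rays in $[0,2\pi)$, namely $\theta = 0, \tfrac{2\pi}{3}, \tfrac{4\pi}{3}$. On each of these, $\cos(3\theta/2) = \cos(\pi k) = (-1)^k$; one then checks the sign to see along which of these the function actually increases outward (so that a flow line genuinely departs from the origin rather than arriving at it). Next I would verify that each such ray is indeed invariant under the flow: since $\pd{f_{\overline{L}}}{\theta} = 0$ identically along $\theta = 2\pi k/3$, the $\partial_\theta$-component of the gradient vanishes there, so the radial ray is a flow-invariant set, and on it the flow satisfies $\dot r = 2\sqrt r\,(-1)^k$, which for the appropriate $k$ gives a genuine outgoing trajectory $r(\tau)$ with $r(0) = 0$.

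Finally I would argue that there are no other gradient flow lines from the origin: any trajectory not lying on one of these three rays has $\sin(3\theta/2) \neq 0$ somewhere, and a short analysis of the angular ODE $\dot\theta = -\tfrac{2}{r^{3/2}}\sin(3\theta/2)$ (after reparametrizing) shows $\theta$ is monotonically repelled from the unstable directions toward the stable ones, so such a trajectory cannot have emanated from $r = 0$ — tracing it backward, $r \to 0$ forces $\theta$ to limit onto one of the three special rays, but then by uniqueness of the flow it must coincide with that ray. The main obstacle I anticipate is the delicacy at the singular point $r = 0$: the vector field $\nabla f_{\overline{L}}$ is not Lipschitz (indeed not even bounded in the $\partial_\theta$ direction) near the origin, so standard ODE uniqueness does not apply off the shelf, and one must instead use the structure of $f_{\overline{L}}$ itself — e.g. that $f_{\overline{L}}$ restricted to a trajectory is strictly monotone, together with the homogeneity $f_{\overline{L}}(\lambda^{2}\!\cdot\!(r,\theta)) = \lambda^3 f_{\overline{L}}(r,\theta)$ under rescaling — to control the limiting behavior and rule out spurious flow lines spiraling into $0$. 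Once that is handled, the count of exactly three follows from the count of solutions of $\sin(3\theta/2) = 0$ with the correct sign of $\cos(3\theta/2)$ modulo $2\pi$.
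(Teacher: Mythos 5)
Your computation of the gradient in polar coordinates and your identification of the rays $\theta=0,\tfrac{2\pi}{3},\tfrac{4\pi}{3}$ (the zero set of $\sin(3\theta/2)$) as the only candidates is exactly the paper's starting point, so the overall strategy is the same. But two points in your write-up, taken literally, do not close. First, your proposed refinement of ``checking the sign of $\cos(3\theta/2)$ to keep only the rays along which the flow genuinely departs from the origin'' would give the wrong count: $\cos(3\theta/2)$ equals $1,-1,1$ at $\theta=0,\tfrac{2\pi}{3},\tfrac{4\pi}{3}$ respectively, so on $\theta=\tfrac{2\pi}{3}$ the radial component is $\dot r=-2\sqrt r<0$ and the trajectory runs \emph{into} the origin in forward time. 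Only two rays survive your test, contradicting the stated count of three. The proposition (and the paper's proof) counts all three trajectories whose closure contains the origin, irrespective of orientation; no sign check is performed, and none should be.

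Second, the step you rightly flag as delicate --- ruling out trajectories that spiral into the origin, where the vector field is not Lipschitz --- is left as a sketch, and the tools you propose (monotonicity of $f_{\overline{L}}$ along trajectories, homogeneity) do not obviously control the angular behaviour as $r\to 0$. The paper closes this gap with a first integral: since $f_{\overline{L}}=\tfrac{4}{3}\,\mathrm{Re}\bigl(z^{3/2}\bigr)$ locally, its harmonic conjugate $r^{3/2}\sin(3\theta/2)$ is constant along every gradient trajectory (the exponent $2/3$ in the paper is a typo for $3/2$), as one also verifies directly from the ODE system. Any trajectory accumulating at the origin must therefore have $r^{3/2}\sin(3\theta/2)\equiv 0$, i.e.\ lie entirely in the union of the three rays; this disposes of spiraling without any limiting argument and renders the singularity at $r=0$ harmless. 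Replacing your backward-limiting discussion with this conserved quantity, and dropping the sign check, yields a complete proof.
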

	\begin{proof}
		In polar coordinates, we have
		$$\nabla f_{\overline{L}}=2\sqrt{r}\cos\left(\frac{3\theta}{2}\right)\pd{}{r}-\frac{2\hbar^{-1}}{\sqrt{r}}\sin\left(\frac{3\theta}{2}\right)\pd{}{\theta}.$$
		Then the gradient flow equation
		$$(\dot{r},\dot{\theta})=\nabla f_{\overline{L}}(r,\theta)$$
		has solution given by
		$$r^{\frac{2}{3}}\sin\left(\frac{3\theta}{2}\right)=C,$$
		where $C$ is a real constant. If the gradient flow lines start from the origin, then we have $C=0$. Hence the gradient flow lines are precisely those straight lines along the directions
		$$\theta=0,\quad\theta=\frac{2\pi}{3},\quad \theta=\frac{4\pi}{3}.$$
	\end{proof}
	
	The three gradient flow lines emitting from the origin of $B$, namely,
	$$(0,\frac{1}{2})\ni t\mapsto te^{\sqrt{-1}\theta},\text{ with }\theta=0,\frac{2\pi}{3},\frac{4\pi}{3},$$
	should correspond to three holomorphic disks bounded by $\overline{L}$ and those fibers of $p:X\to B$ supported on these rays.
	
	Let $\check{p}:\check{X}\to B$ be the dual fibration of $p:X\to B$ and define
	\begin{align*}
		B_{>0}:=&B\backslash\{x\in\bb{C}:x_1\leq 0\},
		\\B_{>0}:=&B\backslash\{x\in\bb{C}:x_1\geq 0\},
		\\\check{X}_{>0}:=&\check{p}^{-1}(B_{>0}),
		\\\check{X}_{<0}:=&\check{p}^{-1}(B_{<0}).
	\end{align*}
	Equip $\overline{L}_0:=\overline{L}\backslash\{0\}$ with a local system $\cu{L}$ with holonomy $-1$. The mirror bundle $\cu{E}_0$ of $(\overline{L}\backslash\{0\},\cu{L})$ has local holomorphic frame $\check{e}_1,\check{e}_2$ on $\check{X}_{>0}$. More precisely,
	$$\check{e}_j=e^{-\frac{2\pi}{\hbar}f_i}\check{1}_i,\quad i=1,2,$$
	The monodromy action around the fiber $\{(0,0)\}\times T^2$ is given by
	\begin{align*}
		\check{e}_1\mapsto&\check{e}_2,
		\\\check{e}_2\mapsto&-\check{e}_1.
	\end{align*}
	
	Let $\dbar_0$ be the Dolbeault operator of $\cu{E}_0$. We need to extend this complex structure to the whole space $X$. Let's delete a small disk $D$ around the origin of $B$. Let $\delta>0$ be small and $\mathfrak{b}_{\delta}$ be a 1-from on $\bb{R}$, supported on $[-\delta,\delta]$ and $\int_{\bb{R}}\mathfrak{b}_{\delta}=1$. Define three elements in $A^{0,1}(\check{X}_{>0},End(\cu{E}_0))$:
	\begin{align*}
		\check{B}_0:=&-Hev(\inner{x,v_0})\cu{F}(\Pi^*_0\mathfrak{b}_{\delta})\check{e}_1^*\otimes\check{e}_2,
		\\\check{B}_1:=&Hev(\inner{x,v_1})\cu{F}(\Pi^*_1\mathfrak{b}_{\delta})\check{e}_2^*\otimes\check{e}_1,
		\\\check{B}_2:=&Hev(\inner{x,v_1})\cu{F}(\Pi^*_2\mathfrak{b}_{\delta})\check{e}_2^*\otimes\check{e}_1,
	\end{align*}
	where $Hev:\bb{R}\to\bb{R}$ is the Heaviside function:
	\begin{align*}
		Hev(x)=\left\{
		\begin{array}{ll}
			1 &\text{ if }x\geq 0,
			\\0 &\text{ if }x<0,
		\end{array}
		\right.
	\end{align*}
	$\Pi_j:\bb{R}^2\to \bb{R}\cdot v_j^{\perp}$, $j=0,1,2$, are the orthogonal projections:
	$$\Pi_j(x):=x-\inner{x,v_j}v_j,$$
	and $\cu{F}$ is the Fourier transform sending $dx^i$ to $d\bar{z}^i$. By choosing $\delta>0$ small enough (such a choice of $\delta$ depends on the radius of the disk $D$), we may assume $\check{B}_0,\check{B}_1,\check{B}_2$ have disjoint support on $\check{p}^{-1}(B_{>0}\backslash D)$. Let
	$$\check{B}:=\check{B}_0+\check{B}_1+\check{B}_2\in A^{0,1}(\check{p}^{-1}(B_{>0}\backslash D),\text{End}(\cu{E}_0)).$$
	Since the support of $\check{B}_i$'s are all away from the ray $\{(x_1,x_2)|x_1\leq 0\}$, $\check{B}$ can be extended to $\check{p}^{-1}(B\backslash D)$. Clearly, $\dbar_0\check{B}=0$ and $[\check{B},\check{B}]=0$ since $\check{B}_j$'s have disjoint support. Therefore, we have
	$$\dbar_0\check{B}+\frac{1}{2}[\check{B},\check{B}]=0,$$
	which means $\dbar_0+\check{B}$ defines a holomorphic structure on the rank 2 complex vector bundle  $\cu{E}_0|_{\check{p}^{-1}(B_{>0}\backslash D)}$.
	
	\begin{proposition}\label{prop:monodromy_free}
		The holomorphic structure $\dbar_0+\check{B}$ is monodromy free around the fiber $\check{p}^{-1}(0)$. Hence $(\cu{E}_0,\dbar_0+\check{B})$ extends to a holomorphic bundle $\cu{E}$ on $\check{X}$.
	\end{proposition}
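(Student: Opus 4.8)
The plan is to compute the monodromy of the flat-type data directly, tracking how the two correction terms (the $U(1)$-local system $\cu{L}$ and the endomorphism-valued $(0,1)$-form $\check{B}$) conspire to cancel. First I would recall that, before the correction $\check{B}$ is switched on, the bundle $\cu{E}_0$ carries monodromy $\check{e}_1\mapsto\check{e}_2$, $\check{e}_2\mapsto-\check{e}_1$ around $\check{p}^{-1}(0)$; this is precisely the matrix $J$ of the paper, so the holonomy has order $4$ rather than being trivial. The point of the three bumps $\check{B}_0,\check{B}_1,\check{B}_2$ supported along the directions $v_0,v_1,v_2$ is that each one contributes a unipotent automorphism $I+(\text{nilpotent})$ when one parallel transports $\dbar_0+\check{B}$ across its support, and the product of these three unipotent factors, interleaved with the three arcs of pure $J$-monodromy between consecutive rays, must equal the identity. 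So the proof reduces to an identity of $2\times 2$ matrices of exactly the same shape as Equation (\ref{eqn:correct_gluing}).

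The key steps, in order, are: (i) fix a basepoint in $B_{>0}$ and a loop around the origin that crosses the three rays $\bb{R}_{\geq 0}v_0$, $\bb{R}_{\geq 0}v_1$, $\bb{R}_{\geq 0}v_2$ transversally in cyclic order; (ii) on each arc between consecutive rays, since $\check{B}$ vanishes there, parallel transport for $\dbar_0+\check{B}$ agrees with that of $\dbar_0$, and after going all the way around this accumulates the monodromy $J$ (coming from the definition of $\cu{E}_0$); (iii) across the support of $\check{B}_k$, solving the parallel transport ODE $\partial_{\bar z}s + \check{B}_k s = 0$ and using $\int_{\bb{R}}\mathfrak{b}_\delta = 1$ shows the transport is the unipotent factor $\exp$ of the residue of $\check{B}_k$, which by construction is $I$ plus a rank-one nilpotent in the off-diagonal slot dual to $\check{e}_2^*\otimes\check{e}_1$ (or $\check{e}_1^*\otimes\check{e}_2$ for $k=0$), with the Heaviside factor $Hev(\inner{x,v_k})$ selecting the correct half-plane so the bump acts only on one side; (iv) multiply the three unipotent factors and the monodromy $J$ together in the order dictated by the loop and check the product is $I$. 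This last computation is the mirror-symmetric twin of Proposition \ref{prop:glued}: the Fourier transform $\cu{F}$ turns the monomials $\tfrac{w^2_0}{w^1_0}$, $\tfrac{w^0_1}{w^2_1}$, $\tfrac{w^1_2}{w^0_2}$ of the $\Theta_{ij}$ into the coefficients of $\check{B}_0,\check{B}_1,\check{B}_2$, and the same cancellation $\left(\tau_{02}'J^{-1}\right)\tau_{21}'\left(J\tau_{10}'\right)=I$ shows the composite monodromy is trivial. Once the monodromy around $\check{p}^{-1}(0)$ is trivial, the holomorphic bundle $(\cu{E}_0,\dbar_0+\check{B})$ on $\check{p}^{-1}(B\backslash D)$ has no obstruction to extending across the removed disk: one covers $\check{p}^{-1}(D)$ by the complement of a single ray, where $\cu{E}_0$ is already trivial as a smooth bundle, and glues in the trivial holomorphic bundle using the (now trivial) transition data, invoking Hartogs-type extension for the holomorphic structure over the codimension-two locus $\check{p}^{-1}(0)$.

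The main obstacle is step (iii)–(iv): being careful about conventions so that the unipotent factors come out with exactly the right signs and Fourier modes, and that they are multiplied in the correct cyclic order relative to the $J$-arcs. In particular one must check that the orientation twist built into $\cu{L}$ (holonomy $-1$) is what upgrades the would-be order-$4$ monodromy $J$ into something that the three unipotent bumps can kill — if $\cu{L}$ were absent the monodromy of $\dbar_0$ alone would already be trivial and no correction would be consistent with an indecomposable bundle, which is the conceptual content flagged in Remark \ref{rmk:rmk_on_caustic}. Granting the bookkeeping, the identity to be verified is literally Equation (\ref{eqn:correct_gluing}) read through $\cu{F}$, so no genuinely new computation is needed beyond Proposition \ref{prop:glued}.
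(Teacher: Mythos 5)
Your proposal follows essentially the same route as the paper: the paper's proof exhibits explicit local $(\dbar_0+\check{B})$-holomorphic frames $E_0,E_1,E_2$ on the three half-plane charts $U_i=\{\inner{x,v_i^{\perp}}>0\}$, observes that the integrals $\int\mathfrak{b}_{\delta}$ jump from $0$ to $1$ across each bump so that the transition matrices are exactly your unipotent factors, inserts the $J$ coming from $\cu{L}$ on $\check{p}^{-1}(B_{<0})$, and checks that the composite is the identity --- i.e.\ precisely your steps (i)--(iv). One aside in your last paragraph is wrong, though it does not affect the argument: if $\cu{L}$ were absent the $\dbar_0$-monodromy would not be trivial but would be the order-two sheet-swap $\left(\begin{smallmatrix}0&1\\1&0\end{smallmatrix}\right)$, since a loop around $0\in B$ still interchanges the two branches of $\overline{L}_0$; the twist by $\cu{L}$ replaces this swap by $J$, and it is $J$ (not the identity) that the three unipotent factors are arranged to cancel.
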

	
	\bibliographystyle{amsplain}
	\bibliography{geometry}
	
\end{document}